\documentclass[letterpaper,10pt,conference]{ieeeconf}

\IEEEoverridecommandlockouts
\usepackage{amsmath,amssymb,amsfonts}
\usepackage{mathtools}
\usepackage{graphicx}
\usepackage{booktabs}
\usepackage{multirow}
\usepackage{algorithm}
\usepackage{algpseudocode}
\usepackage{xcolor}
\usepackage{bm}

\newcounter{proposition}

\newenvironment{proposition}
{%
    \refstepcounter{proposition}%
    \par\noindent
    \textbf{Proposition \theproposition.}\ 
}
{%
    \par
}

\newcommand{\R}{\mathbb{R}}

\newcommand{\sat}{\operatorname{sat}}

\newcommand{\ubar}{\underline{u}}
\newcommand{\xbar}{\underline{x}}
\newcommand{\uupper}{\overline{u}}
\newcommand{\xupper}{\overline{x}}

\title{Box-iLQR: Constraint-Aware Feedback via Finite-Barrier Trajectory Optimization}

\author{
Abhijeet$^{*}$ and Suman Chakravorty$^{*}$%
\thanks{$^{*}$Department of Aerospace Engineering, Texas A\&M University, College Station, TX, USA.}
}

\begin{document}

\maketitle
\thispagestyle{empty}
\pagestyle{empty}

\begin{abstract}

Iterative Linear Quadratic Regulator (iLQR) and Differential Dynamic
Programming (DDP) are attractive trajectory-optimization methods because
their backward recursions provide both nominal trajectories and local
time-varying feedback laws. Hard state and input constraints, however, can
drive solutions toward constraint boundaries where little corrective
authority remains during closed-loop execution.

This paper presents \emph{Box-iLQR}, a log-barrier iLQR method for
box-constrained states and controls, with emphasis on the resulting feedback
policy. Barrier gradients bias nominal trajectories away from constraint
boundaries, while barrier curvature enters the Riccati recursion that
determines the feedback gains. We show that control-barrier curvature
smoothly attenuates feedback as an actuator approaches saturation, while
state-barrier curvature propagates backward through the value function to
shape feedback before future state constraints are encountered. This motivates retaining a finite barrier rather than always driving the
barrier parameter toward zero, thereby preserving state and actuator margin.
Comparisons with CL-DDP and ALTRO are presented on pendulum, cart-pole,
Acrobot, and car-parking problems. Monte Carlo experiments demonstrate
improved closed-loop behavior with finite barriers, while a six-input MuJoCo
fish example demonstrates data-driven Box-iLQR using dynamics Jacobians
estimated from simulator rollouts. Box-iLQR achieves convergence comparable to, and in some cases better than, the baseline methods. Moreover, the feedback policy obtained with a finite barrier ensures constraint satisfaction even under execution uncertainty.

\end{abstract}

\section{Introduction}
\label{sec:intro}

Trajectory optimization is a fundamental tool for motion planning and control
of nonlinear robotic systems. Among shooting-based methods, Differential
Dynamic Programming (DDP) and the iterative Linear Quadratic Regulator (iLQR)
are particularly attractive because they exploit the temporal structure of the
optimal-control problem and produce a time-varying local feedback law together
with the optimized nominal trajectory
\cite{mayne1965ddp,ILQR_todorov,MNGM1}. Modern robotics software and
algorithms such as Crocoddyl further demonstrate the practical utility of
Riccati-based optimal-control methods for complex nonlinear systems
\cite{mastalli20crocoddyl}.

Among Riccati-based trajectory-optimization methods, we focus on iLQR rather
than full DDP. Recent Sequential Quadratic Programming (SQP) interpretations show that iLQR provides a consistent
quadratic approximation of the discrete optimal-control problem, while the
additional dynamics-Hessian terms in DDP can produce indefinite local models
and poor cost predictions away from a solution
\cite{abhi_DDP_ILQR,abhi_DDP_ILQR_ACC}. This makes iLQR particularly suitable
for the barrier-continuation setting considered here, where updates of the
barrier parameters repeatedly change the local objective and may move the
optimization away from its current minimizer.

The feedback returned by iLQR is not merely a numerical byproduct. A related
line of work views nonlinear control and reinforcement learning as the search
for a nominal maneuver together with a local feedback controller, rather than
as the direct search for a high-dimensional global policy
\cite{rand2c,wang2022d2c,MNGM1}. In this view, trajectory
optimization determines the operating trajectory, while local perturbation
dynamics determine the feedback correction around it.

The central difficulty is that practical systems are constrained. Actuators
saturate, joint or workspace coordinates are bounded, and safety requirements
can impose explicit limits on states. CL-DDP incorporates box-constrained
controls through stagewise constrained quadratic subproblems
\cite{CL_DDP}. Feasibility-driven DDP methods improve globalization and
initialization behavior \cite{mastalli2020fddp}, while hybrid and projection
methods offer alternative constrained formulations
\cite{lantoine2012hybrid,Giftthaler2017APA,Xie2017DifferentialDP}. ALTRO
combines an augmented-Lagrangian formulation with structure-exploiting
trajectory optimization to address general nonlinear constraints
\cite{howell2019altro}. Interior-point methods provide another important class of constrained
optimization techniques, including structured optimal-control solvers and
interior-point variants of DDP
\cite{rao1998application,DDP_IPM,FATROP,pavlov2021ipddp}.
A key distinction of the primal log-barrier formulation used in Box-iLQR is
that every accepted iterate remains strictly inside the feasible region.
In contrast, augmented-Lagrangian methods may pass through infeasible
intermediate trajectories while progressively reducing the constraint
residual. Primal-dual interior-point methods likewise do not require
strict primal feasibility at every intermediate iteration
\cite{FATROP,DDP_IPM,pavlov2021ipddp}. Box-iLQR, therefore, preserves strict
feasibility throughout the accepted optimization sequence, at the cost of
requiring a strictly feasible initialization. Moreover, Box-iLQR offers a critical advantage for feedback-oriented deployment:
the log-barrier curvature directly shapes the Riccati gains, ensuring
closed-loop system satisfies the constraints under uncertainty. 

Pure primal barrier methods, however, introduce an important numerical issue.
Wright showed that after reducing a barrier parameter, a Newton step can become
strongly infeasible even when computed from an interior iterate
\cite{wright1995primalbarrier}. Classical nonlinear optimization therefore
combines barrier models with globalization mechanisms such as line search or
trust regions \cite{nocedal2006numerical}. Similar behavior is relevant to
trajectory optimization because a control perturbation at one time instant
affects all downstream states.

This paper takes a complementary viewpoint: if the output of trajectory
optimization will be used directly as a \emph{feedback controller}, driving
the barrier parameter arbitrarily close to zero may not be desirable.
At a hard-constrained optimum, a control can lie almost exactly at its
saturation limit, leaving little authority for disturbance rejection in one
direction. Likewise, a nominal state that approaches a path-constraint
boundary has little geometric margin against execution uncertainty. Retaining
a finite logarithmic barrier intentionally shifts the nominal solution into
the feasible interior and simultaneously modifies the local curvature from
which the feedback matrix is computed.


The principal contributions of this work are:
\begin{itemize}

    \item A finite-barrier Box-iLQR formulation that handles simultaneous state and control box constraints while preserving the standard local feedback policy parametrization of iLQR/DDP leading to the structured backward-forward search.

    \item A characterization of how barrier curvature shapes the feedback gains. As a constrained control approaches its bound, the corresponding row of the feedback matrix tends to zero. On the other hand, the state-barrier curvature propagates backward  and modifies the feedback before the state constraint is reached thereby allowing for constraint satisfaction even in the presence of uncertainty.

    \item A finite-barrier stopping rule for feedback deployment. Instead of
    driving the barrier parameter to zero, the optimization may
    terminate at a positive barrier value to retain state and actuator margins
    for closed-loop execution that ensures constraint satisfaction.

    \item Numerical validation against CL-DDP and ALTRO, together with a
data-driven MuJoCo example in which the local dynamics are identified from
simulator rollouts. Experiments show that Box-iLQR is competitive/ better than CL-DDP
and ALTRO on nominal constrained otpimal control, while the finite-barrier feedback
reduces observed state-constraint violations to zero in problems under uncertainty, which the competing methods cannot accomplish.

\end{itemize}

\section{Problem Formulation}
\label{sec:problem}

Consider the discrete-time finite-horizon optimal-control problem

\begin{subequations}
\label{eq:ocp}
\begin{align}
\min_{\{u_k\}_{k=0}^{N-1}}
\quad &
J =
\phi(x_N)
+
\sum_{k=0}^{N-1}
\ell(x_k,u_k),
\\
\text{s.t.}\quad
&
x_{k+1}=f(x_k,u_k),
\qquad
k=0,\ldots,N-1,
\\
&
\xbar_i < x_{k,i} < \xupper_i,
\qquad i\in\mathcal I_x,
\\
&
\ubar_j < u_{k,j} < \uupper_j,
\qquad j\in\mathcal I_u .
\end{align}
\end{subequations}

Here $x_k\in\R^n$, $u_k\in\R^m$, and $x_0$ is fixed. Only selected state or
control components need to be constrained. We assume that $f$ is continuously
differentiable and that the nominal running and terminal costs are twice
continuously differentiable over the region visited by the optimizer.

\subsection{Continuous- and discrete-time state constraints}

If the original plant is continuous time,

\begin{equation}
    \dot x=f_c(x,u),
\end{equation}

and $u(t)=u_k$ on
$I_k=[t_k,t_{k+1}]$, let $F_{\Delta t}$ denote the exact zero-order-hold flow
map. Then

\begin{equation}
    x_{k+1}=F_{\Delta t}(x_k,u_k)
\end{equation}

is an exact sampled representation of the continuous dynamics.

A zero order control hold means the control is not violated for continuous dynamics as well. Although for states, we only have endpoint feasibility as we convert continuous dynamics to discrete. Endpoint feasibility alone does not generally imply continuous-time path
feasibility. For a constrained state component $x_i$, assume additionally that
$x_i(t)$ is monotonic on each sampling interval, $I_k$. Under this explicit assumption,

\begin{equation}
\xbar_i
\leq x_{k,i},x_{k+1,i}
\leq \xupper_i
\quad
\Longleftrightarrow
\quad
\xbar_i\leq x_i(t)\leq\xupper_i,
\;\;
t\in I_k .
\label{eq:monotone_equiv}
\end{equation}

With numerical integration,
\eqref{eq:monotone_equiv} holds only up to the integration error. This
qualification is important whenever discrete nodes are used to represent
continuous-time state constraints
\cite{abhi_boxilqr_preprint}.

A continuous-time performance index in Lagrange form can be readily approximated by the discrete cost in \eqref{eq:ocp} using a numerical transcription scheme such as Euler or Runge--Kutta integration.

\subsection{Log-barrier subproblem}

For state-barrier parameters $\mu_i>0$ and control-barrier parameters
$\sigma_j>0$, define

\begin{align}
b_x(x_k;\mu)
&=
-\sum_{i\in\mathcal I_x}
\mu_i
\left[
\log(x_{k,i}-\xbar_i)
+
\log(\xupper_i-x_{k,i})
\right],
\label{eq:bx}
\\
b_u(u_k;\sigma)
&=
-\sum_{j\in\mathcal I_u}
\sigma_j
\left[
\log(u_{k,j}-\ubar_j)
+
\log(\uupper_j-u_{k,j})
\right].
\label{eq:bu}
\end{align}

The barrier-augmented running cost is

\begin{equation}
\ell_b(x_k,u_k)
=
\ell(x_k,u_k)
+
b_x(x_k;\mu)
+
b_u(u_k;\sigma).
\label{eq:augcost}
\end{equation}

The terminal state barrier is included whenever $x_N$ is constrained. Because the logarithm is defined only in the strict interior, every accepted
Box-iLQR iterate remains strictly feasible with respect to the discrete box
constraints. This differs from penalty and augmented-Lagrangian approaches,
which can admit infeasible intermediate iterates while driving the constraint
residual toward zero. Thus, feasibility is enforced directly by the domain of
the optimization problem rather than only through a penalty on constraint
violation.

For a constrained scalar control $u_j$,

\begin{align}
(b_u)_{u_j}
&=
-\sigma_j
\left[
\frac{1}{u_j-\ubar_j}
-
\frac{1}{\uupper_j-u_j}
\right],
\\
(b_u)_{u_ju_j}
&=
\sigma_j
\left[
\frac{1}{(u_j-\ubar_j)^2}
+
\frac{1}{(\uupper_j-u_j)^2}
\right]
>0 .
\label{eq:barrier_hessian}
\end{align}

Analogous expressions hold for constrained state components. The barrier
therefore contributes both a repulsive gradient and a rapidly increasing
positive curvature near each boundary, as in standard logarithmic
interior-point methods \cite{nocedal2006numerical}.

\section{Box-iLQR}
\label{sec:method}

Box-iLQR retains the standard backward--forward structure of iLQR while
incorporating the state and control barriers defined in
Sec.~\ref{sec:problem}. As summarized in Algorithm~\ref{alg:boxilqr}, each
iteration first performs a backward pass along the current nominal trajectory
to compute a local feedforward correction and feedback policy. A nonlinear
forward rollout is then performed using this policy, with line search used to
ensure sufficient decrease and strict feasibility. Previous work provides the convergence analysis and associated theoretical
guarantees for the underlying log-barrier formulation~\cite{abhi_boxilqr_preprint}.

Let $(\bar x_k,\bar u_k)$ denote the current nominal trajectory. Linearizing
the dynamics about this trajectory gives
\begin{equation}
    \delta x_{k+1}
    =
    A_k\delta x_k+B_k\delta u_k,
    \label{eq:linear_dynamics}
\end{equation}
where
$
    A_k=f_x(\bar x_k,\bar u_k),
    \quad \text{and}\quad
    B_k=f_u(\bar x_k,\bar u_k).
$

During the \emph{backward pass} of Algorithm~\ref{alg:boxilqr}, the
barrier-augmented stage cost is quadratically approximated along the nominal
trajectory. Combining this approximation with the value function at
$k+1$ gives
\begin{align}
    Q_x
    &=
    \ell_{b,x}+A_k^\top V_x^{k+1},
    &
    Q_u
    &=
    \ell_{b,u}+B_k^\top V_x^{k+1},
    \label{eq:QxQu}
    \\
    Q_{xx}
    &=
    \ell_{b,xx}+A_k^\top V_{xx}^{k+1}A_k,
    &
    Q_{uu}
    &=
    \ell_{b,uu}+B_k^\top V_{xx}^{k+1}B_k,
    \label{eq:QxxQuu}
    \\
    Q_{ux}
    &=
    \ell_{b,ux}+B_k^\top V_{xx}^{k+1}A_k.
    \label{eq:Qux}
\end{align}
The terminal conditions on $V_{x}$ and $V_{xx}$ are 
\begin{align*}
    V_{x}^{N} = \phi_{x},\quad \text{and} \quad  V_{xx}^{N} = \phi_{xx}.
\end{align*}
When required, the control Hessian is regularized as
\begin{equation}
    \widetilde Q_{uu}
    =
    Q_{uu}+\lambda I,
    \qquad
    \lambda\geq0,
    \label{eq:regularized_quu}
\end{equation}
so that it remains positive definite.

Minimization of the local quadratic model gives the affine control correction
\begin{equation}
    \delta u_k
    =
    k_k+K_k\delta x_k,
    \label{eq:local_policy}
\end{equation}
where
\begin{equation}
    k_k
    =
    -\widetilde Q_{uu}^{-1}Q_u,
    \qquad
    K_k
    =
    -\widetilde Q_{uu}^{-1}Q_{ux}.
    \label{eq:feedback}
\end{equation}
The value-function derivatives are then propagated backward in time using the
usual Riccati recursion. Because the barrier derivatives are included in
$\ell_b$, both state and control constraints directly modify the same
backward recursion that produces $k_k$ and $K_k$.

During the \emph{forward pass} of Algorithm~\ref{alg:boxilqr}, the nonlinear
system is rolled out using
\begin{align}
    u_k^+(\alpha)
    &=
    \bar u_k
    +
    \alpha k_k
    +
    K_k(x_k^+-\bar x_k),
    \label{eq:forward_u}
    \\
    x_{k+1}^+
    &=
    f(x_k^+,u_k^+),
    \qquad
    x_0^+=x_0.
    \label{eq:forward_x}
\end{align}
Here $\alpha\in(0,1]$ scales the feedforward update while retaining the
feedback correction. Before the barrier cost is evaluated, the complete
nonlinear rollout is checked for strict satisfaction of all box constraints.
If the rollout is infeasible or does not provide sufficient decrease,
Algorithm~\ref{alg:boxilqr} reduces the step according to
\begin{equation}
    \alpha
    \leftarrow
    \beta_\alpha\alpha,
    \qquad
    0<\beta_\alpha<1.
    \label{eq:alpha_backtrack}
\end{equation}
Only when feasibility cannot be recovered for
$\alpha\geq\bar\alpha$ is the corresponding barrier parameter increased and
the backward pass recomputed. Thus, the trajectory update is first
globalized through line search before the barrier model itself is modified.
This ordering is motivated by the infeasible-Newton behavior of pure primal
barrier methods \cite{wright1995primalbarrier}.


Full DDP additionally includes contractions between the value gradient and
second derivatives of the dynamics. These terms can improve local convergence near a solution, but away from the
optimum they need not preserve positive curvature and can produce a poor local
model. Recent work shows that iLQR corresponds to a consistent
SQP approximation of the discrete optimal-control problem and
retains stronger descent properties
\cite{abhi_DDP_ILQR,abhi_DDP_ILQR_ACC}. Since barrier continuation repeatedly
changes the local objective, we use the more robust iLQR approximation.

Algorithm~\ref{alg:boxilqr} summarizes the feasibility-first continuation
strategy. If no feasible rollout is recovered, the barrier associated with
the violated constraint is increased; if feasible rollouts exist but none
provides sufficient decrease, the regularization is increased instead.
After convergence of the fixed-barrier subproblem, the barrier parameters
are reduced toward $\gamma_{\mathrm{stop}}$. For nominal constrained
optimization, $\gamma_{\mathrm{stop}}$ is chosen near zero, whereas for
feedback deployment continuation may terminate at a finite
$\gamma_{\mathrm{fb}}>0$.
\begin{algorithm}[t]
\caption{Box-iLQR}
\label{alg:boxilqr}
\begin{algorithmic}[1]

\Require Strictly feasible $(\bar x,\bar u)$,
barriers $\gamma=[\mu^\top,\sigma^\top]^\top>0$

\While{termination condition not satisfied}

    \State Linearize dynamics and quadratize the
    barrier-augmented cost

    \State Backward pass: compute
    $(k_k,K_k)_{k=0}^{N-1}$ using
    \eqref{eq:QxQu}--\eqref{eq:feedback}

    \State $\alpha\gets1$, $\mathrm{accepted}\gets\mathrm{false}$

    \While{$\alpha\geq\bar\alpha$}

        \State Roll out
        \eqref{eq:forward_u}--\eqref{eq:forward_x}

        \If{rollout is infeasible}
            \State $\alpha\gets\beta_\alpha\alpha$

        \ElsIf{sufficient decrease is satisfied}
            \State $(\bar x,\bar u)\gets(x^+,u^+)$
            \State $\mathrm{accepted}\gets\mathrm{true}$
            \State \textbf{break}

        \Else
            \State $\alpha\gets\beta_\alpha\alpha$
        \EndIf

    \EndWhile

    \If{no feasible rollout was found}
        \State Increase offending barrier parameter(s)

    \ElsIf{\textbf{not} $\mathrm{accepted}$}
        \State Increase regularization $\lambda$

    \ElsIf{fixed-barrier problem has converged}
        \State Reduce $\gamma$ toward $\gamma_{\mathrm{stop}}$
    \EndIf

\EndWhile

\State \Return $(x^\star,u^\star,K^\star)$

\end{algorithmic}
\end{algorithm}

\section{Finite Barriers as Feedback Shaping}
\label{sec:feedback}

The logarithmic barrier does more than enforce feasibility during trajectory
optimization: its gradient modifies the nominal trajectory, while its
curvature directly affects the feedback gains computed in the backward pass.

Consider a scalar constrained quantity
$
    z\in(\underline z,\overline z)
$
with barrier
\begin{equation}
    b(z;\gamma)
    =
    -\gamma
    \left[
        \log(z-\underline z)
        +
        \log(\overline z-z)
    \right].
\end{equation}
Its descent contribution is
\begin{equation}
    -\nabla b
    =
    \gamma
    \left[
        \frac{1}{z-\underline z}
        -
        \frac{1}{\overline z-z}
    \right],
    \label{eq:push}
\end{equation}
which increasingly repels the nominal trajectory from a nearby boundary.
Along the central path, perturbed complementarity gives approximately
\begin{equation}
    s
    \approx
    \frac{\gamma}{\lambda^\star},
    \label{eq:slack}
\end{equation}
where $s$ is the active constraint slack and $\lambda^\star$ is the
corresponding Lagrange multiplier
\cite{nocedal2006numerical} for hard constraint. Hence, retaining $\gamma>0$ deliberately
preserves finite interior margin.

For a constrained input component $u_{k,j}$, write the control
Hessian as
\begin{equation}
    \widetilde Q_{uu,k}
    =
    H_k+D_{b,k},
    \label{eq:Quu_decomposition}
\end{equation}
where $H_k$ contains the non-barrier terms, and $D_{b,k}$
is the diagonal barrier Hessian. Its $j$th diagonal entry is
\begin{equation}
    [D_{b,k}]_{jj}
    =
    \sigma_j
    \left[
        \frac{1}{(u_{k,j}-\underline u_j)^2}
        +
        \frac{1}{(\overline u_j-u_{k,j})^2}
    \right].
    \label{eq:Dbjj}
\end{equation}

\begin{proposition}\label{prop:atten}
Suppose $H_k$ and $Q_{ux,k}$ remain bounded and the block of
$\widetilde Q_{uu,k}$ associated with the remaining controls remains uniformly
nonsingular. For fixed $\sigma_j>0$, if $u_{k,j}$ approaches either of its
bounds, then
\begin{equation}
    K_{k,j:}\rightarrow0.
    \label{eq:K_zero}
\end{equation}
\end{proposition}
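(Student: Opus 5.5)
Write $d:=[D_{b,k}]_{jj}$. By \eqref{eq:Dbjj}, as $u_{k,j}$ approaches $\underline u_j$ or $\overline u_j$ with $\sigma_j>0$ fixed, one of the two terms blows up and the other stays bounded, so $d\to\infty$. The proof reduces to showing that the $j$th row of $\widetilde Q_{uu,k}^{-1}$ tends to zero. Indeed, by \eqref{eq:feedback} we have $K_{k,j:}=-e_j^\top\widetilde Q_{uu,k}^{-1}Q_{ux,k}$, and $Q_{ux,k}$ is bounded by hypothesis, so it suffices to prove $e_j^\top\widetilde Q_{uu,k}^{-1}\to 0$.

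To do this I permute coordinates so that $j$ comes first and partition $\widetilde Q_{uu,k}$ as
\begin{equation*}
\widetilde Q_{uu,k}=\begin{bmatrix} h_{jj}+d & c^\top \\ c & M\end{bmatrix}.
\end{equation*}
Here $h_{jj}$ and $c$ come from $H_k$ and are therefore bounded. The block $M$ collects the remaining controls (including their own barrier entries). By assumption, $M$ is uniformly nonsingular, i.e.\ $\|M^{-1}\|$ is bounded.

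Next I use the Schur complement of $M$, namely $s=h_{jj}+d-c^\top M^{-1}c$. The block-inverse formula gives the first row of the inverse as
\begin{equation*}
s^{-1}\begin{bmatrix}1 & -c^\top M^{-1}\end{bmatrix}.
\end{equation*}
Since $c^\top M^{-1}c$ and $h_{jj}$ are bounded while $d\to\infty$, we get $s\to\infty$, and $s$ is nonzero for $d$ large enough. The vector $[1,\,-c^\top M^{-1}]$ is bounded, so the whole row tends to zero. Multiplying by the bounded $Q_{ux,k}$ then yields $K_{k,j:}\to 0$, which is \eqref{eq:K_zero}. For completeness, the same argument shows $k_{k,j}\to 0$ whenever $Q_u$'s non-barrier part stays bounded, but the barrier gradient in $Q_u$ also diverges, so I will not claim that.

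The main obstacle is making the uniformity hypotheses precise along the limiting sequence. The other barrier entries in $M$ might also vary, but the hypothesis that $M$ is uniformly nonsingular covers exactly this. One must also make sure that the regularization $\lambda$ does not interfere; it only adds a bounded term to $h_{jj}$ and to $M$, so it is harmless provided the statement's boundedness assumptions are understood to include it.
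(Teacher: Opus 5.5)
Your proof is correct and follows the paper's argument. You permute the $j$th control to the front and apply the block-inverse (Schur complement) formula, using $d_j\to\infty$, a bounded off-diagonal block and a bounded inverse of the remaining block, to get $e_j^\top\widetilde Q_{uu,k}^{-1}=\mathcal O(d_j^{-1})$, then multiply by the bounded $Q_{ux,k}$; the only difference is that you write out the row $s^{-1}[1,\,-c^\top M^{-1}]$ explicitly, which the paper summarizes as $\mathcal O(d_j^{-1})$.
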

\begin{proof}
Let
\begin{equation}
    d_j=[D_{b,k}]_{jj}.
\end{equation}
From \eqref{eq:Dbjj}, $d_j\rightarrow\infty$ as the corresponding slack tends
to zero. After permuting the $j$th control to the first coordinate,
\begin{equation}
    \widetilde Q_{uu,k}
    =
    \begin{bmatrix}
        d_j+\mathcal O(1) & b^\top\\
        b & C
    \end{bmatrix},
\end{equation}
where $b$ and $C^{-1}$ remain bounded. The block inverse formula therefore
gives
\begin{equation}
    e_j^\top
    \widetilde Q_{uu,k}^{-1}
    =
    \mathcal O(d_j^{-1}).
\end{equation}
Since $Q_{ux,k}$ is bounded,
\begin{equation}
    K_{k,j:}
    =
    -e_j^\top
    \widetilde Q_{uu,k}^{-1}Q_{ux,k}
    \rightarrow0.
\end{equation}
\end{proof}

For a slack $s_j$ from the approaching bound,
\begin{equation}
    [D_{b,k}]_{jj}
    \approx
    \left(
        \frac{\sigma_j}{s_j^2}
    \right),
\end{equation}
so Proposition~1 gives
\begin{equation}
    \|K_{k,j:}\|
    =
    \mathcal O
    \left(
        \frac{s_j^2}{\sigma_j}
    \right),
    \qquad
    s_j\rightarrow0.
    \label{eq:K_slack_rate}
\end{equation}

CL-DDP exhibits a related limiting behavior through its active-set
formulation. When a control is clamped at a bound, the corresponding feedback
row is set to zero \cite{CL_DDP}. Box-iLQR reaches the same limiting structure
continuously: as the nominal input approaches saturation, the logarithmic
barrier curvature increases and progressively attenuates the corresponding
feedback gain. Thus, CL-DDP changes the local feedback structure through a
discrete active-set decision, whereas Box-iLQR modifies it smoothly through
the barrier Hessian.

State barriers influence the feedback policy through the same backward pass.
Their curvature enters $\ell_{b,xx}$ and is propagated backward through the
value Hessian $V_{xx}$. At an earlier time step,
\begin{align}
    Q_{uu}
    &=
    \ell_{b,uu}
    +
    B^\top V_{xx}^{k+1}B,
    \\
    Q_{ux}
    &=
    \ell_{b,ux}
    +
    B^\top V_{xx}^{k+1}A,
\end{align}
and therefore
\begin{equation}
    K
    =
    -Q_{uu}^{-1}Q_{ux}.
\end{equation}
Consequently, curvature generated by a future state constraint influences
earlier feedback gains before the nominal trajectory reaches the constrained
region. We refer to this effect as constraint-aware or anticipatory feedback.

Finally, we distinguish between a barrier used to approximate the
hard-constrained optimum and one retained for feedback deployment:
\begin{align}
    (x_{\mathrm{opt}},u_{\mathrm{opt}},K_{\mathrm{opt}})
    &:\
    \gamma_{\mathrm{stop}}
    =
    \epsilon_{\mathrm{opt}},
    \qquad
    \epsilon_{\mathrm{opt}}\ll1,
    \\
    (x_{\mathrm{fb}},u_{\mathrm{fb}},K_{\mathrm{fb}})
    &:\
    \gamma_{\mathrm{stop}}
    =
    \gamma_{\mathrm{fb}}>0.
\end{align}
The first provides a closer approximation to the hard-constrained nominal
solution. The second intentionally retains state and actuator margin and is
used when the feedback policy itself is an important output of the
optimization. The finite barrier therefore trades nominal optimality for
interior margin and potentially improved closed-loop robustness.


Although we do not provide guarantees of constraint satisfaction under uncertainty, we evaluate its closed-loop effect empirically in
Sect.~\ref{sec:results}.

\section{Experiments}
\label{sec:results}

Box-iLQR is evaluated on pendulum, cart-pole, Acrobot, and car-parking
benchmarks and on a data-driven MuJoCo fish model. For the baseline comparisons, ALTRO is implemented using the open-source
\texttt{Altro.jl} package~\cite{howell2019altro}, while CL-DDP is implemented
following the control-limited DDP formulation of Tassa \emph{et al.}~\cite{CL_DDP}. Reported iteration counts are solver-specific convergence
statistics and should not be interpreted as direct computational-time
comparisons.

The pendulum, cart-pole, and Acrobot controls satisfy $|u|\leq1$,
$|u|\leq2$, and $|u|\leq5$, respectively. For the state-constrained
cart-pole, the cart position additionally satisfies $|x|\leq0.15$~m. The
car has componentwise control bounds $|u_1|\leq0.5$ and $|u_2|\leq2$; its
state-constrained variant additionally requires
$|x|,|y|\leq1.5$~m. The control-only car-parking task uses
$x_g=[0,0,0,0]^\top$ as in \cite{CL_DDP}, whereas the state-constrained task uses
$x_g=[0,0,2\pi,0]^\top$.

\subsection{Benchmark Comparison}

\begin{table*}[t]
\centering
\caption{BENCHMARK COMPARISON. COSTS ARE EVALUATED USING THE COMMON NOMINAL OBJECTIVE AND EXCLUDE BARRIER, PENALTY, AND AUGMENTED-LAGRANGIAN TERMS. ( N/A - CL-DDP cannot accomodate state constraints. )}
\label{tab:solver_comparison}

\resizebox{\textwidth}{!}{
\begin{tabular}{llrrrrrr}
\toprule
&
&
\multicolumn{2}{c}{ALTRO}
&
\multicolumn{2}{c}{CL-DDP}
&
\multicolumn{2}{c}{Box-iLQR}
\\
\cmidrule(lr){3-4}
\cmidrule(lr){5-6}
\cmidrule(lr){7-8}

Problem
&
Constraints
&
Iterations & Cost
&
Iterations & Cost
&
Iterations & Cost
\\
\midrule

Pendulum
&
Control
&
436 & 67.24
&
206 & \textbf{55.46}
&
527 & \textbf{55.49}
\\

Cart-pole
&
Control
&
1091 & 994.92
&
3554 & \textbf{762.44}
&
8226 & \textbf{762.45}
\\

Cart-pole
&
State + control
&
968 & 912.38
&
N/A & --
&
3729 & \textbf{765.05}
\\

Acrobot
&
Control
&
4665 & \textbf{39581.50}
&
4797 & 41712.44
&
4096 & 40494.38
\\

Car parking
&
Control
&
142 & \textbf{1.4361}
&
1597 & 1.9448
&
335 & 1.9065
\\

Car parking
&
State + control
&
113 & 0.70
&
N/A & --
&
954 & \textbf{0.69}
\\

\bottomrule
\end{tabular}}
\vspace{-0.5cm}
\end{table*}

Table~\ref{tab:solver_comparison} shows that Box-iLQR obtains solutions
comparable to the established constrained solvers on the control-limited
benchmarks. For the pendulum, Box-iLQR and CL-DDP obtain nearly identical
costs, $55.49$ and $55.46$, respectively. The same behavior is observed for
the control-constrained cart-pole, where the corresponding costs are
$762.45$ and $762.44$. For control-only car parking, ALTRO obtains the lowest reported objective,
1.4361, followed by Box-iLQR at 1.9065 and CL-DDP at 1.9448. ALTRO converged to a better solution than Box-iLQR/CL-DDP.

All three methods converge for the Acrobot. ALTRO obtains the lowest cost,
$39581.50$, followed by Box-iLQR at $40494.38$ and CL-DDP at $41712.44$.
Figure~\ref{fig:acrobot_comparison} compares the resulting trajectories. The
different objective values emphasize that the methods are local nonlinear
optimizers and need not converge to identical stationary solutions even from
the same task initialization.

\begin{figure}[t]
    \centering
    \includegraphics[width=\columnwidth]
    {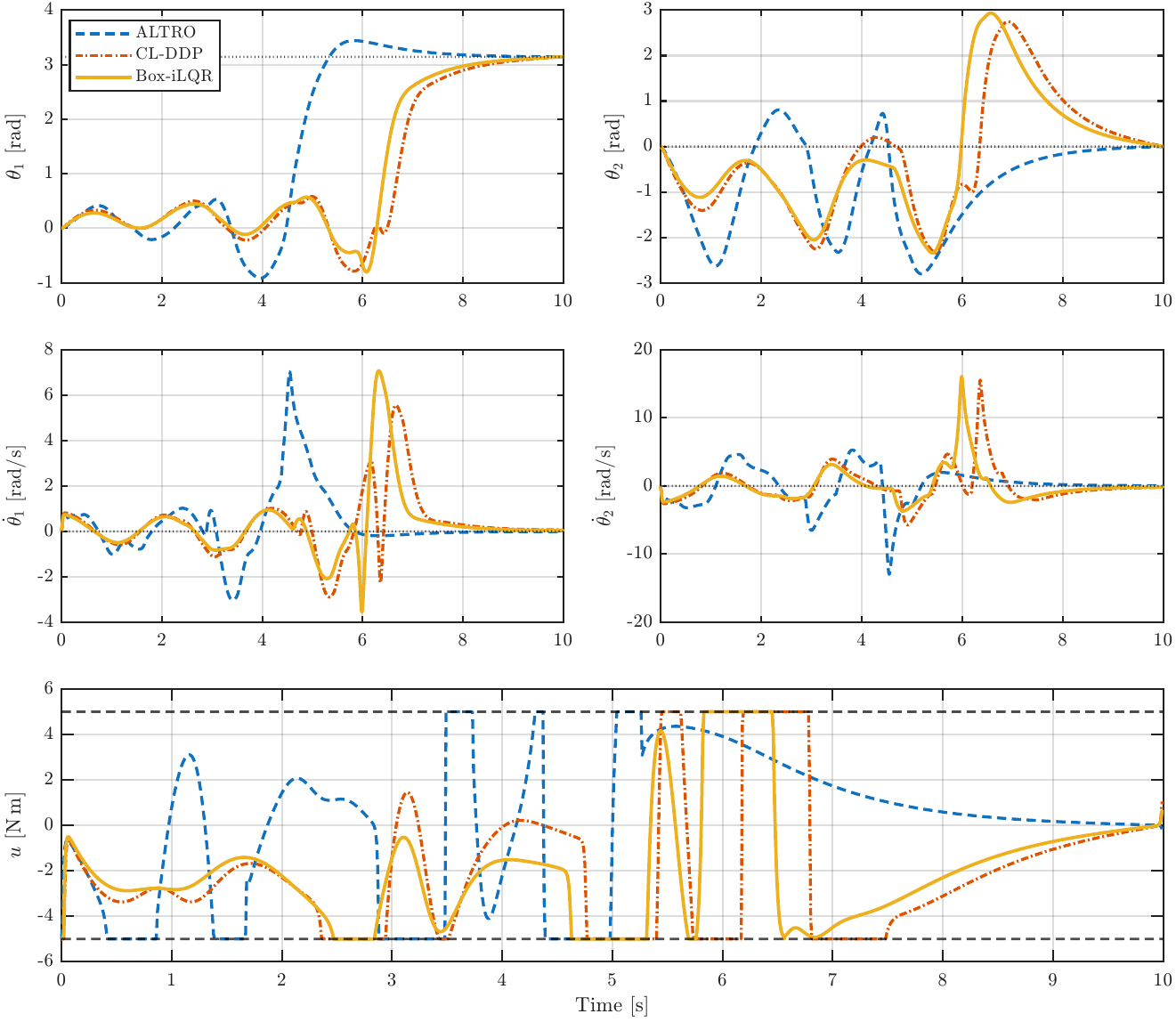}
    \caption{Acrobot swing-up using ALTRO, CL-DDP, and Box-iLQR. All three
    solutions satisfy the actuator constraint $|u|\leq5$.}
    \label{fig:acrobot_comparison}
\end{figure}

The main distinction appears when state constraints are introduced. For the
state-constrained cart-pole, ALTRO converges to a feasible solution
with higher cost, $912.38$, while the CL-DDP formulation considered here does not
accommodate state constraints. Box-iLQR converges with cost $765.05$.
For state-constrained car parking, both ALTRO and Box-iLQR converge to
constraint-feasible trajectories, while the CL-DDP formulation considered
here is not applicable. As shown in Fig.~\ref{fig:car_state_constrained},
the ALTRO and Box-iLQR reach the desired terminal
configuration $x_g=[0,0,2\pi,0]^\top$, with a final cost of $0.70$ and $0.69$, respectively. For the state-constrained car-parking task, ALTRO was sensitive to
initialization and solver parameters in our experiments; the solution
reported in Table~I is the converged constraint-feasible result used for
comparison.

The
closed-loop effect of the resulting feedback law is examined later in
Fig.~\ref{fig:car_feedback}, where Box-iLQR feedback is shown to eliminate
the state-constraint violations observed under open-loop execution in the
tested Monte Carlo set.

\begin{figure}[t]
    \centering
    \includegraphics[width=\columnwidth]
    {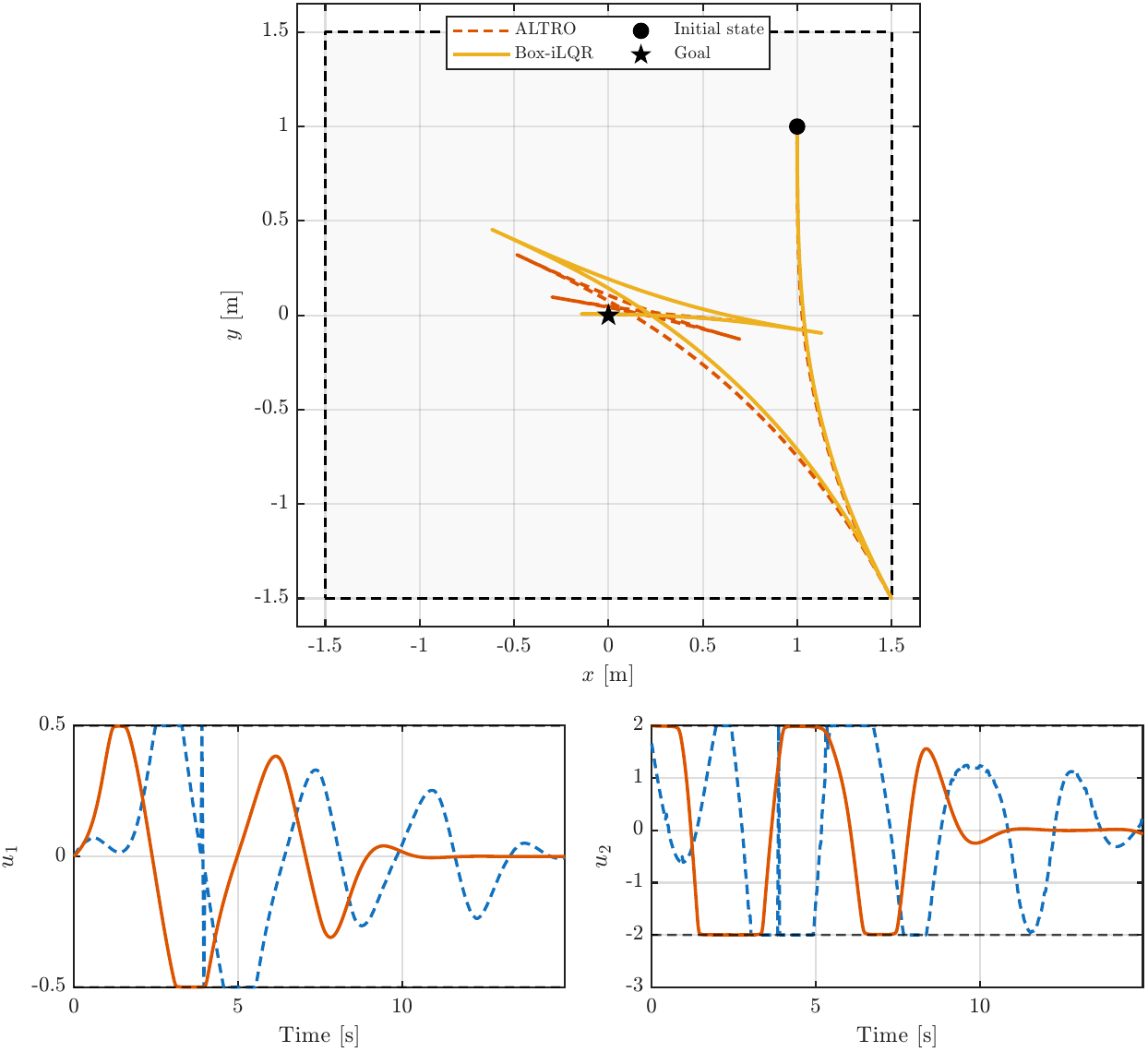}
    \caption{State- and control-constrained car parking. Both trajectories satisfy
the position constraints $-1.5\leq x,y\leq1.5$. Both Box-iLQR and ALTRO reach
the desired terminal configuration $x_g=[0,0,2\pi,0]^\top$, with Box-iLQR attaining a slightly lower nominal cost. The lower panels show the corresponding
control histories and bounds.}
    \label{fig:car_state_constrained}
    \vspace{-0.5cm}
\end{figure}

These results indicate that Box-iLQR is not uniformly superior in iteration
count or nominal objective value. Its main advantage is instead the ability to
treat state and control box constraints in the same Riccati-based formulation
while retaining a local feedback policy.

Box-iLQR maintains strict feasibility of every accepted
iterate, whereas the augmented-Lagrangian mechanism used by ALTRO does not
require intermediate trajectories to remain feasible. This property is particularly useful when intermediate trajectories are
passed to a simulator, hardware system, or another planning layer that assumes
constraint-feasible states and controls.

\subsection{Finite-Barrier Feedback}

We next examine whether the barrier value that best approximates the
hard-constrained optimum also provides the most useful feedback controller.
For stochastic execution, the applied input is
\begin{equation}
u_k^{(q)}
=
\sat_{[\ubar,\uupper]}
\left(
u_{k,\mathrm{fb}}^{(q)}
+
\rho
|u_{k,\mathrm{fb}}^{(q)}|
\odot\xi_k^{(q)}
\right),
\label{eq:noise}
\end{equation}
where
$
u_{k,\mathrm{fb}}^{(q)}
=
u_k^\star
+
K_k(x_k^{(q)}-x_k^\star),
\qquad
\xi_k^{(q)}\sim\mathcal N(0,I).
$
Thus, the disturbance magnitude scales with the current feedback-corrected
input.

For the control-constrained cart-pole, feedback obtained at
$\sigma\simeq0.03$ is compared with Box-iLQR continued to approximately
$\sigma=10^{-6}$ and with CL-DDP. As shown in Fig.~\ref{fig:sigma_feedback_comparison}, the finite-barrier
feedback produces substantially smaller terminal errors ($||x_{N} - x_{g}||_{2}$) over the tested
uncertainty range than the aggressively continued Box-iLQR solution and
CL-DDP. The finite-barrier nominal solution still
reaches the upright equilibrium with terminal error of order $10^{-3}$ in $1000$ Monte Carlo realizations.
 Using the feedback, the terminal error is almost zero for feedback at barrier parameter $\approx0.03$. The shaded region in Fig.~\ref{fig:sigma_feedback_comparison} denotes
$\bar e_N \pm 3s_e$, where $s_e$ is the sample standard deviation of the
terminal error. In addition to achieving a lower mean terminal error, the
finite-barrier Box-iLQR policy exhibits a narrow $3s_e$ envelope, indicating
low variability across the Monte Carlo realizations. Thus, its improved
closed-loop performance is not limited to the mean response; the terminal
errors remain tightly concentrated across the tested realizations.

\begin{figure}[t]
    \centering
    \includegraphics[width=\columnwidth]
    {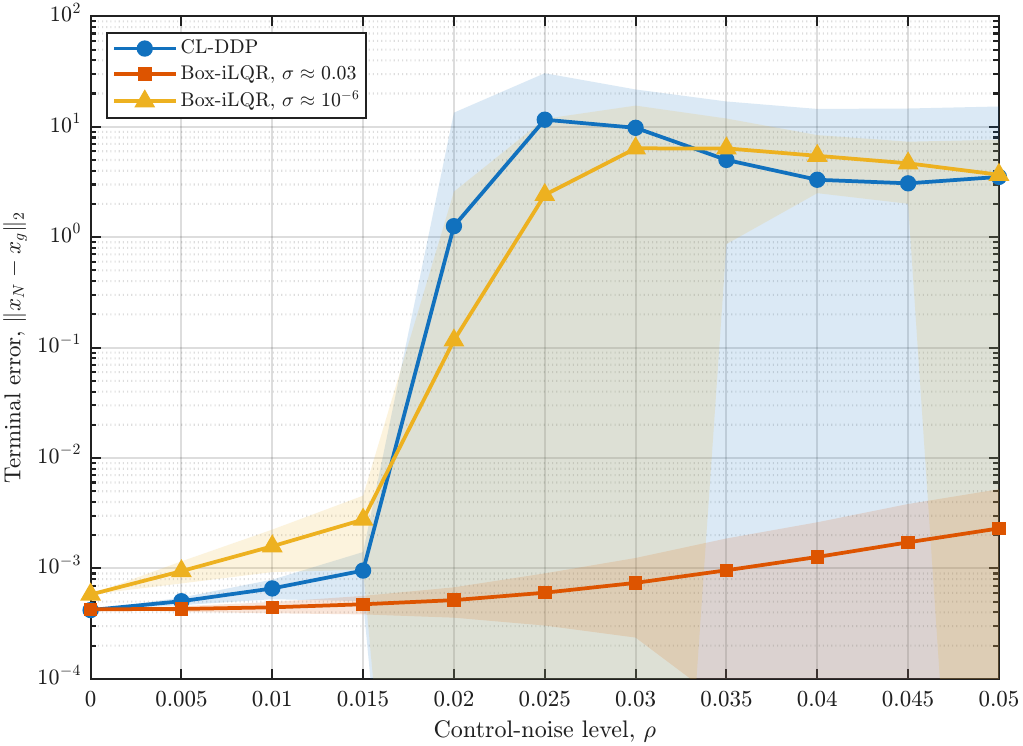}
    \caption{Cart-pole terminal error under execution uncertainty.
    Solid curves show the mean over 1000 Monte Carlo realizations, and
    the shaded regions denote $\bar e_N \pm 3s_e$, where $s_e$ is the
    sample standard deviation of the terminal error. Box-iLQR feedback
    at $\sigma\simeq0.03$ is compared with Box-iLQR at
    $\sigma\simeq10^{-6}$ and CL-DDP.}
    \label{fig:sigma_feedback_comparison}
    \vspace{-0.5cm}
\end{figure}

For the state-constrained cart-pole, we retain the larger barrier
$\gamma_{\mathrm{fb}}\approx0.02$ to provide additional interior margin and perform $100$
closed-loop simulations at $\rho=0.05$. Figure~\ref{fig:state_constraint_mc}
shows all four states and the applied control. No cart-position constraint
violation is observed in the tested realizations. The state trajectories have
very small dispersion and therefore largely overlap the nominal trajectory;
the individual Monte Carlo realizations are more readily visible in the
control history.

\begin{figure}[t]
    \centering
    \includegraphics[width=\columnwidth]
    {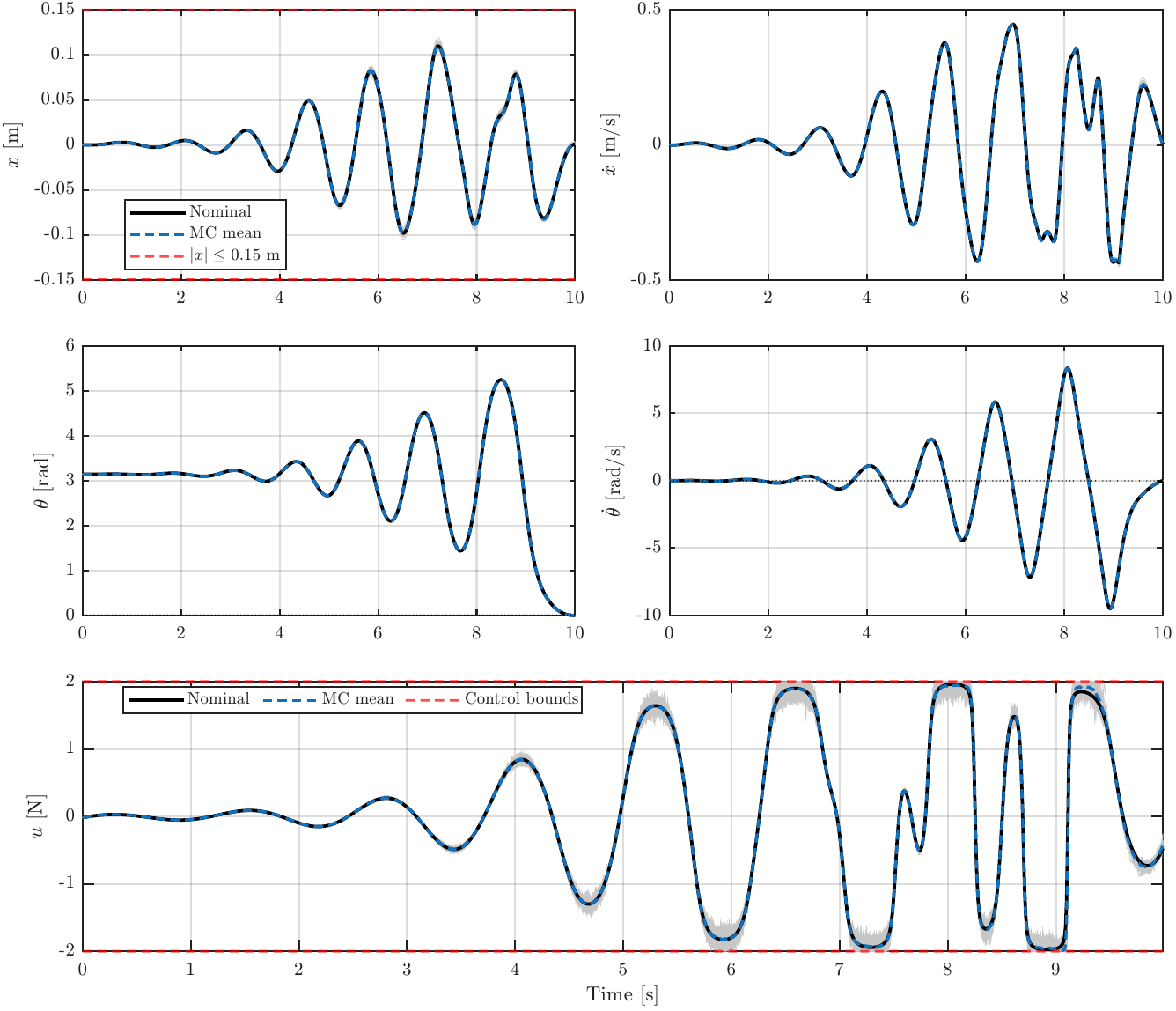}
    \caption{State-constrained cart-pole feedback at $\sigma\simeq \mu \simeq 0.02$ and
    $\rho=0.05$. Gray curves denote $100$ Monte Carlo realizations. The constraints are $|x|\leq0.15$~m and $|u|\leq2$;
    no state violation is observed in the tested set.}
    \label{fig:state_constraint_mc}
\end{figure}

We perform the same closed-loop evaluation for the state-constrained car-parking problem. Figure~\ref{fig:car_feedback} shows $100$ Monte Carlo realizations at a
finite barrier value of approximately $\sigma\approx0.02$ and noise level
$\rho=0.05$.  The controls are saturated at their prescribed bounds,
$|u_1|\leq0.5$ and $|u_2|\leq2$, while the position constraints are
$|x|,|y|\leq1.5$~m.

We compare open-loop execution,
$
    u_k=u_k^\star,
$
with the Box-iLQR feedback policy,
$
    u_k
    =
    u_k^\star
    +
    K_k(x_k-x_k^\star),
$
under identical realizations of the multiplicative execution uncertainty in
\eqref{eq:noise}. Table~\ref{tab:robustness} summarizes the results. For the
car-parking problem, $29$ of $100$ open-loop realizations violate the state
constraints, whereas no violation is observed among the $100$ realizations
using Box-iLQR feedback. Box-iLQR likewise achieves a 100\% success rate for
the state-constrained cart-pole problem. These results demonstrate that the
feedback substantially improves robustness across both state-constrained
examples. A closed-loop comparison with ALTRO is not included because ALTRO is used typically
to compute constrained nominal trajectories and does not directly provide a feedback policy that can ensure constraint satisfaction under uncertainty as analyzed in this section. In addition,
its augmented-Lagrangian iterations need not remain feasible at
intermediate steps, unlike the accepted iterates of Box-iLQR, thereby not allowing to use a feedback associated with a sub-optimal iterate as used by Box-ILQR.

\begin{table}[t]
\vspace{-0.2cm}
  \centering
  \footnotesize
  \setlength{\tabcolsep}{4pt}
  \caption{State-constraint violations over 100 Monte Carlo runs.}
  \label{tab:robustness}
  \begin{tabular}{@{}lcc@{}}
    \toprule
    Problem & Open loop & Box-iLQR (finite-barrier) \\
    \midrule
    Car parking (state) & 29/100 & 0/100 \\
    Cart-pole (state)   & 100/100     & 0/100 \\
    \bottomrule
  \end{tabular}
  \vspace{-0.5cm}
\end{table}

\begin{figure}[t]
    \centering
    \includegraphics[width=\columnwidth]
    {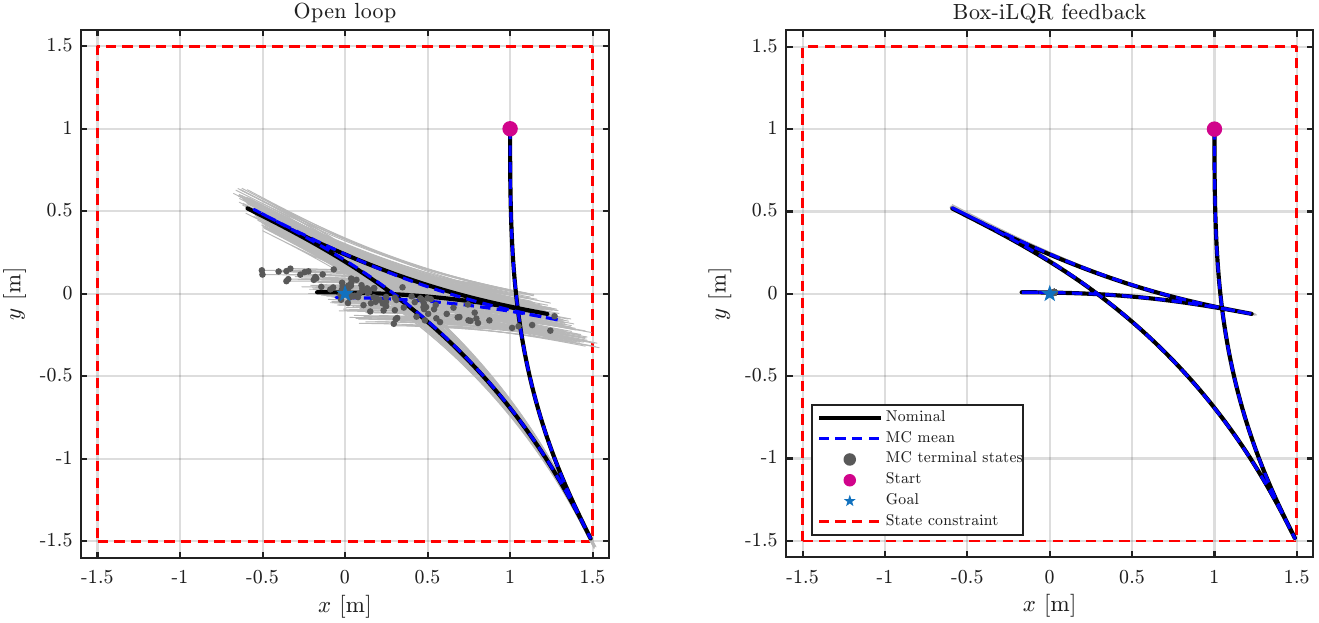}
    \caption{State-constrained car parking under identical execution disturbances
    with $\rho=0.05$. Gray curves show $100$ Monte Carlo realizations, with the
    nominal trajectory in solid black and the Monte Carlo mean dashed. The red
    boundary denotes $|x|,|y|\leq1.5$~m. Open-loop execution violates the state
    constraints in $29/100$ runs, while no violation is observed with
    finite-barrier Box-iLQR feedback at $\gamma_{\mathrm{fb}}\approx0.02$.}
    \label{fig:car_feedback}
\end{figure}

\subsection{Data-Driven MuJoCo Fish}
\label{sec:fish}

The final experiment applies Box-iLQR to the six-input MuJoCo fish
\cite{todorov2012mujoco}, with $|u_j|\leq1$, $j=1,\ldots,6$. Unlike the
analytical benchmarks, the local dynamics are identified from black-box
simulator rollouts following the D2C/search-for-feedback viewpoint
\cite{rand2c,wang2022d2c}. Around the nominal trajectory,
the linear perturbation model
\begin{equation}
    \delta x_{k+1}
    \approx
    \widehat A_k\delta x_k
    +
    \widehat B_k\delta u_k
    \label{eq:data_ltv}
\end{equation}
is estimated by least squares from $200$ symmetric perturbation rollouts.
The identified matrices replace the analytical Jacobians in the Box-iLQR
backward pass. The task is
to drive the fish toward
$
    p_g=
    \begin{bmatrix}
        0 & 0.4 & 0.2
    \end{bmatrix}^{\top}.
$




Both iLQR and Box-iLQR drove the fish to the target. Figure~\ref{fig:fish_control_comparison} compares the resulting Box-iLQR
control histories with unconstrained iLQR. The unconstrained solution requests
inputs outside the admissible interval, whereas data-driven Box-iLQR keeps all
six controls within their prescribed bounds. This demonstrates that the same
barrier formulation can handle multiple simultaneously constrained actuators
even when the local plant Jacobians are obtained entirely from rollout data.

\begin{figure}[t]
    \centering
    \includegraphics[width=\linewidth]
    {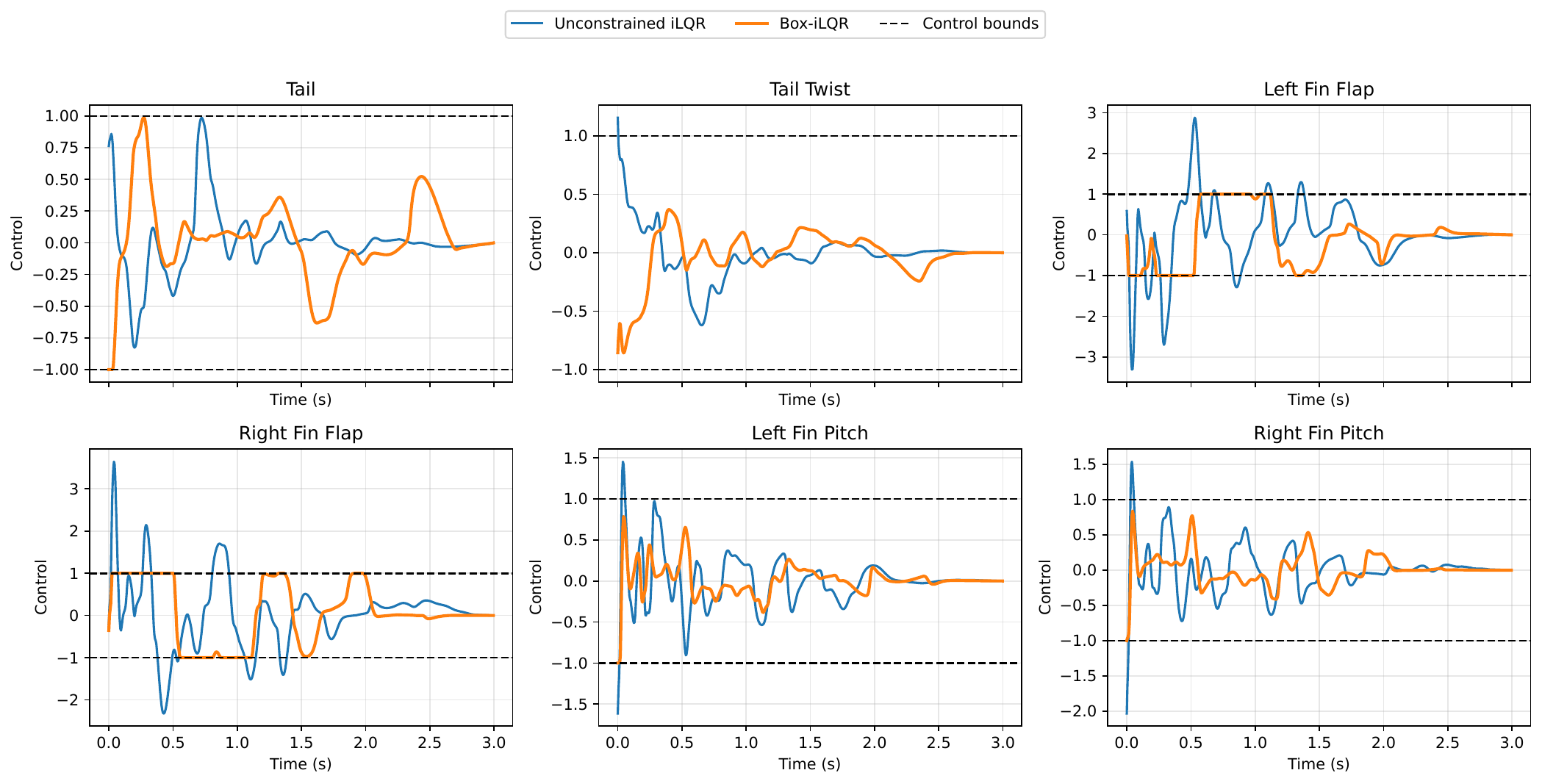}
    \caption{Six-input MuJoCo fish controls. Unconstrained iLQR exceeds the
    actuator limits, whereas data-driven Box-iLQR satisfies all six bounds.}
    \label{fig:fish_control_comparison}
    \vspace{-0.5cm}
\end{figure}

Closed-loop robustness is evaluated for
$\rho\in\{0.01,0.02,\ldots,0.50\}$ using $1000$ realizations per noise level.
To quantify the deviation caused solely by execution uncertainty, we measure
the terminal position error relative to the nominal Box-iLQR terminal
position:
$
    e_N^{(q)}
    =
    \left\|
    p_f^{(q)} - p_f^{\mathrm{nom}}
    \right\|_2 ,
$
where $p_f^{(q)}$ is the terminal fish position in the $q$th Monte Carlo
realization and $p_f^{\mathrm{nom}}$ is the nominal terminal position.
Figure~\ref{fig:fish_terminal_error} reports the mean terminal error and
$3s_{e}$ dispersion, where $s_{e}$ denotes the standard deviation. The results show that finite-barrier feedback recovered from identified local
dynamics remains effective across the tested noise range, demonstrating that
Box-iLQR can combine constrained trajectory optimization and local feedback
recovery without analytical plant Jacobians. Even at $\rho=0.3$, the
$\bar e_N \pm 3s_e$ envelope remains below $0.05$, indicating tightly
concentrated terminal position errors despite $30\%$ multiplicative control
noise.

\begin{figure}[t]
    \centering
    \includegraphics[width=\columnwidth]
    {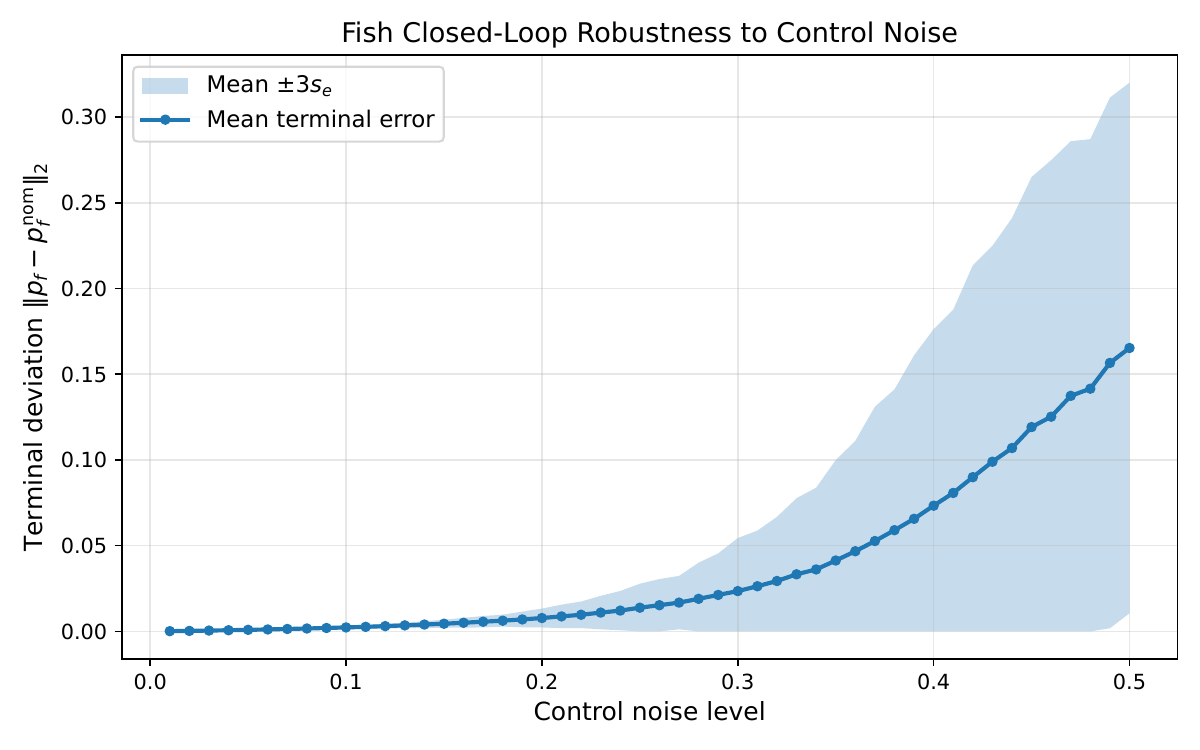}
    \caption{MuJoCo fish terminal position error relative to the nominal
    terminal position. The solid curve shows the mean over 1000 Monte Carlo
    realizations and the shaded region denotes $\bar e_N\pm3s_e$, where $s_e$
    is the sample standard deviation of the terminal position error.}
    \label{fig:fish_terminal_error}
    \vspace{-0.5cm}
\end{figure}

\section{Conclusion}
\label{sec:conclusion}

This paper presented Box-iLQR, a log-barrier iLQR method for box-constrained
states and controls that preserves the standard backward--forward structure
and local affine feedback policy of unconstrained iLQR. iLQR is chosen over full DDP because its local approximation provides more
reliable descent behavior away from the optimum~\cite{abhi_DDP_ILQR_ACC}. 

The barrier admits a useful feedback interpretation. As a constrained
control approaches saturation, its curvature increases and smoothly attenuates
the corresponding feedback row, while state-barrier curvature propagates
backward through the value function and influences feedback before a future
state boundary is encountered. This motivates retaining a finite barrier when
the feedback law, rather than only the nominal constrained optimum, is the
desired output. Unlike penalty-based constrained trajectory optimization, the log-barrier
formulation maintains strict feasibility of accepted iterates, while still
returning the local feedback policy associated with the constrained
trajectory.

Across pendulum, cart-pole, Acrobot, and car-parking benchmarks, Box-iLQR
produces competitive control-constrained solutions and directly handles the
state-constrained cases considered here. For state-constrained car parking, both ALTRO and Box-iLQR reach the desired
terminal configuration, with comparable nominal costs. Closed-loop cart-pole and car-parking experiments
demonstrate the value of the barrier-modified feedback policy under execution
uncertainty. Retaining a finite barrier improves closed-loop behavior in the
cart-pole and car-parking example, while Box-iLQR feedback eliminates the state-constraint
violations observed under open-loop car-parking execution in the tested Monte
Carlo set. The six-input MuJoCo fish experiment further shows that the same
constrained feedback framework can use local dynamics identified entirely
from simulator rollouts.

Future work will focus on automatically selecting finite barrier values using disturbance or covariance information, developing infeasible-start and receding-horizon variants, and validating the method on physical robotic platforms. We also plan to extend the feedback framework beyond box constraints to more general state and control constraints.

\bibliographystyle{IEEEtran}
\bibliography{references_icra2027}

\end{document}